\theoremstyle{plain}
\newtheorem{theorem}{Theorem}[section]
\newtheorem{proposition}[theorem]{Proposition}
\newtheorem{corollary}[theorem]{Corollary}
\theoremstyle{definition}
\newtheorem{definition}[theorem]{Definition}
\theoremstyle{remark}
\newtheorem{remark}[theorem]{Remark}
\newcommand{\elltwo}{\ell_2}
\newcommand{\N}{\mathbb{N}}
\newcommand{\K}{\mathbb{K}}
\renewcommand{\le}{\leqslant}
\renewcommand{\ge}{\geqslant}
\title[Raja's covering index of $L_p$ spaces]%
{Raja's covering index of $L_p$ spaces}
\author[T.~Kania]{Tomasz Kania}
\address[T.~Kania]{Mathematical Institute\\Czech Academy of Sciences\\\v Zitn\'a 25 \\115 67 Praha 1\\Czech Republic and  Institute of Mathematics and Computer Science\\ Jagiellonian University\\ {\L}ojasiewicza 6, 30-348 Krak\'{o}w, Poland}
\email{kania@math.cas.cz, tomasz.marcin.kania@gmail.com}
\author[N.~Ma\'slany]{Natalia Ma\'slany}
\address[N.~Ma\'slany]{Institute of Mathematics and Computer Science  \\ Jagiellonian University\\ {\L}ojasiewicza 6, 30-348 Krak\'{o}w, Poland}
\email{nataliamaslany97@gmail.com}
\subjclass[2020]{Primary 46B20; Secondary 46B03, 46B25}
\keywords{Covering index, essential inradius, Banach spaces, $L_p$ spaces, asymptotic uniform smoothness}
\thanks{RVO: 67985840. The second-named author acknowledged with thanks funding received from NCN Sonata-Bis 13 (2023/50/E/ST1/00067). }
\begin{document}
\maketitle

\begin{abstract}
We study Raja's covering index $\Theta_X(n)$ for classical $L_p$-spaces and their non-commutative counterparts.  
For infinite-dimensional Hilbert spaces we compute the covering index exactly, proving
\[
\Theta_H(n)=n^{-1/2}\qquad(n\in\mathbb N);
\]
in particular $\Theta_H(2)=1/\sqrt2$, thus answering a question of Raja about the precise two-piece covering index of $\elltwo$.  
For scalar-valued Lebesgue spaces $L_p(\mu)$, $1\le p<\infty$, we construct an explicit block decomposition of the unit ball yielding the upper bound $\Theta_{L_p(\mu)}(n)\le n^{-1/p}$ for all $n\in\mathbb{N}$; in particular $\Theta_{\ell_p}(n)\le n^{-1/p}$.  
For $1<p<\infty$, under the corresponding $p$-AUS renormability hypothesis, this combines with Raja's general lower bound to give the sharp asymptotic estimate $\Theta_{L_p(\mu)}(n)\asymp n^{-1/p}$.  

We also obtain uniform upper bounds $\Theta_{L_p(\mu;E)}(n)\le n^{-1/p}$ for Bochner spaces $L_p(\mu;E)$ over non-atomic $\sigma$-finite measure spaces, with constants independent of the Banach space $E$; this shows that, at the level of power-type upper estimates, the covering index decays at the same rate regardless of the asymptotic geometry of~$E$ and provides a partial negative answer to a problem of Raja.  
Finally, using non-commutative Clarkson inequalities, we derive power-type lower bounds $\Theta_{L_p(M,\tau)}(n)\gtrsim n^{-1/r}$ for non-commutative $L_p(M,\tau)$ spaces associated with semifinite von Neumann algebras, where $r=\min\{p,2\}$.  We do not attempt to optimise the exponent or constants in the non-commutative setting.
\end{abstract}

\section*{Introduction}

In a recent preprint \cite{RajaCoveringIndex}, Raja introduced a new quantitative invariant of Banach spaces, the \emph{covering index} $\Theta_X(n)$, defined via convex coverings of the unit ball by finitely many pieces each containing a large finite-codimensional ball.  More precisely, for a Banach space $X$ and each $n\in\mathbb{N}$, the quantity $\Theta_X(n)$ is the infimum, over all coverings of $B_X$ by $n$ closed convex sets, of the maximum \emph{essential inradius} among those sets.  This invariant measures how resistant the unit ball is to being decomposed into convex parts, and turns out to be closely related to asymptotic uniform smoothness and the Szlenk index.

The covering index provides a natural quantitative refinement of classical decomposition techniques in Banach space theory.  While various notions of asymptotic geometry---such as asymptotic uniform smoothness, asymptotic uniform convexity, and the Szlenk index---have been extensively studied and applied to problems in renorming theory, the structure of reflexive spaces, and fixed-point theory, the covering index offers a fresh perspective by encoding geometric information through optimal decompositions of the unit ball.  In particular, the decay rate of $\Theta_X(n)$ as $n\to\infty$ captures subtle structural features of~$X$ that are not immediately apparent from standard moduli alone.  Raja's framework establishes that the covering index sits naturally between local and global geometric properties: on the one hand, it can be estimated via asymptotic smoothness moduli; on the other hand, its precise value depends on the interplay between finite-dimensional and infinite-dimensional features of the space.

Raja developed a general framework connecting the decay of $\Theta_X(n)$ as $n\to\infty$ with the asymptotic geometry of $X$.  In particular, he proved that if $X$ admits an equivalent $p$-asymptotically uniformly smooth ($p$-AUS) norm, then one necessarily has a power-type lower bound
\[
\Theta_X(n)\ \gtrsim\ n^{-1/p}.
\]
Raja computed or estimated $\Theta_X$ for several concrete classes of spaces and posed a number of open problems about exact values in specific cases.  One of these asks for the precise value of $\Theta_{\elltwo}(2)$; more generally, while his methods identify the correct exponent for many classical sequence spaces, they do not yield sharp constants.  Another question (Problem~4.7 in \cite{RajaCoveringIndex}) asks to what extent the behaviour of $\Theta_X$ is governed by the asymptotic moduli of uniform convexity and uniform smoothness of $X$.\smallskip

The purpose of this note is threefold.\smallskip

First, we compute the covering index of infinite-dimensional Hilbert spaces exactly:
for every infinite-dimensional Hilbert space $H$ and every $n\in\mathbb N$,
\[
\Theta_H(n)=n^{-1/2}.
\]
In particular,
\[
\Theta_H(2)=\frac1{\sqrt2}.
\]
Raja explicitly asked for the exact value of $\Theta_{\ell_2}(2)$
(Problem~4.6 in~\cite{RajaCoveringIndex}).
The upper bound is elementary and comes from an orthogonal decomposition of $H$
into $n$ infinite-dimensional subspaces, while the matching lower bound follows
from an explicit computation of Raja's goal derivation on Hilbert balls.

Second, we give an explicit block decomposition of the unit ball in the scalar-valued Lebesgue spaces $L_p(\mu)$, $1\le p<\infty$, which produces coverings by $n$ closed convex sets of essential inradius $n^{-1/p}$.  This yields the upper bound
\[
\Theta_{L_p(\mu)}(n)\ \le\ n^{-1/p}\qquad(1\le p<\infty,\ n\in\mathbb{N}),
\]
and, in particular, $\Theta_{\ell_p}(n)\le n^{-1/p}$ for the sequence spaces.  For $1<p<\infty$, whenever $L_p(\mu)$ admits an equivalent $p$-AUS norm (as is the case, for instance, for the sequence spaces $\ell_p$ and for $L_p[0,1]$; see \cite{KnaustOdellSchlumprecht1999,RajaAUS}), Raja's general theorem supplies a matching lower bound of order $n^{-1/p}$.  We therefore obtain the sharp asymptotic behaviour
\[
\Theta_{L_p(\mu)}(n)\ \asymp\ n^{-1/p}
\qquad(1<p<\infty),
\]
and a completely explicit covering which exhibits the optimal decay rate.

Third, we use the same block-decomposition technique to obtain upper bounds in the vector-valued setting and to comment on Raja's Problem~4.7.  For any non-zero Banach space~$E$ and any non-atomic $\sigma$-finite measure space $(\Omega,\Sigma,\mu)$, we show that the covering index of the Bochner space $L_p(\mu;E)$ satisfies
\[
\Theta_{L_p(\mu;E)}(n)\ \le\ n^{-1/p}
\qquad(1\le p<\infty,\ n\in\mathbb{N}),
\]
with no dependence on the geometry of~$E$.  Since the asymptotic moduli of $L_p(\mu;E)$ can vary dramatically as $E$ ranges over separable Banach spaces (including spaces that are not AUSable at all), this shows that, at least on the level of power-type upper bounds, the covering index decays at the same rate regardless of the asymptotic uniform smoothness of~$E$; this provides a partial negative answer to Problem~4.7 of \cite{RajaCoveringIndex}.

Finally, we briefly discuss non-commutative $L_p$-spaces.  If $(M,\tau)$ is a semifinite von Neumann algebra with faithful normal semifinite trace, then the associated non-commutative spaces $L_p(M,\tau)$, $1<p<\infty$, are uniformly convex and uniformly smooth.  Non-commutative Clarkson inequalities show that their modulus of smoothness has power type $r=\min\{p,2\}$ \cite{PisierXu}, which in turn implies that $L_p(M,\tau)$ is $r$-AUS for $1<p<\infty$.  Applying Raja's general theorem we obtain power-type lower estimates
\[
\Theta_{L_p(M,\tau)}(n)\ \gtrsim\ n^{-1/r},
\qquad r=\min\{p,2\},
\]
for the covering index of non-commutative $L_p$-spaces.  Even in the commutative case these bounds are not optimal when $p>2$, and we do not attempt to determine the sharp exponent or constants in the non-commutative setting.

\section{Preliminaries}\label{sec:Preliminaries}

Let $X$ be a Banach space with unit ball $B_X$.

\begin{definition}[Essential inradius {\cite{RajaCoveringIndex}}]
For a set $A\subset X$, define
\[
\varrho(A)
=
\sup\Bigl\{r>0: \exists x\in A\
   \exists Y\le X\ \text{closed}, \dim X/Y < \infty, \dim Y = \infty,\
   x+r(B_X\cap Y)\subset A\Bigr\}.
\]
\end{definition}

\begin{definition}[Covering index {\cite{RajaCoveringIndex}}]
Let $\mathcal{C}(X)$ denote the family of closed, convex, bounded subsets of $X$.
For $n\in\mathbb{N}$ we define the \emph{covering index} of $X$ at level $n$ by
\[
\Theta_X(n)
=
\inf\Bigl\{
   \max_{1\le k\le n} \varrho(A_k)
   \;:\;
   A_k\in\mathcal{C}(X)\ (k=1,\dots,n),\ 
   B_X = \bigcup_{k=1}^n A_k
\Bigr\}.
\]
\end{definition}

The index is \emph{isomorphic}: equivalent norms change $\Theta_X$ only up to absolute multiplicative constants \cite[Sec.~1]{RajaCoveringIndex}.

For completeness, we record the standard AUS definitions and Raja's general lower bound.

\begin{definition}
Let $X$ be a Banach space. For $t\ge0$, we define the \emph{(global) modulus of smoothness}
\[
\rho_X(t)
=
\sup_{\|x\|=1,\ \|y\|=1}
\left(\frac{1}{2}\|x+ty\| + \frac{1}{2}\|x-ty\| - 1\right),
\]
and the \emph{asymptotic modulus of smoothness}
\[
\overline{\rho}_X(t)
=
\sup_{x\in S_X}\ 
\inf_{\substack{Y\le X\\ \dim(X/Y)<\infty}}\ 
\sup_{y\in S_Y}(\|x+ty\|-1).
\]
We say that $X$ is \emph{asymptotically uniformly smooth} (AUS) if
$\overline{\rho}_X(t)=o(t)$ as $t\to0$.
For $p\in(1,\infty)$ we say that $X$ is \emph{$p$-AUS} if
$\overline{\rho}_X(t)\le C t^p$ for some $C>0$ and all $t>0$.
\end{definition}

\begin{remark}\label{rmrk:modulus}
For every Banach space $X$ and every $t>0$ one has
\[
\overline{\rho}_X(t)\le \rho_X(t).
\]
\end{remark}

\begin{definition}[$p$-AUSable space]
A Banach space $X$ is \emph{$p$-AUSable} if it admits an~equivalent $p$-AUS norm.
\end{definition}

\begin{theorem}[Raja's lower bound {\cite[Thm.~3.5]{RajaCoveringIndex}}]\label{thm:Raja-lower}
If $X$ is $p$-AUSable, then there exists $c>0$ such that
\[
\Theta_X(n)\ \ge\ c\,n^{-1/p}
\qquad(n\ge1).
\]
\end{theorem}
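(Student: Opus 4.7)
My plan is to combine the isomorphism-invariance of $\ThetaX$, an iterated form of the $p$-AUS inequality, and an escape argument forcing the covering to exhaust more pieces than are available. First, by the isomorphism-invariance of $\ThetaX$ under equivalent renorming (see Section~\ref{sec:Preliminaries}), I may assume from the outset that $X$ itself is $p$-AUS, with $\overline{\rho}_X(t)\le K t^p$ for some $K>0$ and all $t>0$; any loss from renorming will be absorbed into the final constant $c$.

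The analytic core of the argument is a standard iterated form of the $p$-AUS inequality. By the definition of $\overline{\rho}_X$, for every $x\in X\setminus\{0\}$ and every $\eta>0$ there is a finite-codimensional infinite-dimensional subspace $Y\le X$ with $\|x+ty\|\le \|x\|+K t^p\|x\|^{1-p}+\eta$ for all $y\in B_X\cap Y$ and all $t>0$. Iterating this estimate, and choosing successive nested subspaces $Y_1\supset Y_2\supset\cdots$ adaptively after each escape vector is produced, one obtains for every $m\in\N$, $r>0$ and every choice of $y_j\in B_X\cap Y_j$ the $\ell_p$-type upper bound
\[
\Bigl\|\sum_{j=1}^{m} r\, y_j\Bigr\| \le C_K\,(m r^p)^{1/p},
\]
with $C_K$ depending only on $K$ and $p$. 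This is the mechanism that converts a quantitative $p$-AUS condition into a lower bound on the covering index.

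Now I argue by contradiction: suppose $B_X=\bigcup_{k=1}^n A_k$ is a closed convex covering with $\max_k \varrho(A_k)<r$ and $r<c(K,p)\,n^{-1/p}$ for a sufficiently small constant $c$. Set $x_0:=0$ and inductively define $(x_j)$ as follows. Given $x_j\in B_X$, pick some $k_j$ with $x_j\in A_{k_j}$; using $\varrho(A_{k_j})<r$, choose a finite-codimensional $Y_{j+1}$ compatible with the adaptive scheme of the previous paragraph and an escape vector $y_{j+1}\in B_X\cap Y_{j+1}$ with $x_{j+1}:=x_j+r y_{j+1}\notin A_{k_j}$. The iterated inequality keeps all $x_j$ inside $B_X$ for $j\le n$ once $c$ is small enough, so each $x_j$ lies in some $A_{k_j}$. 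The main obstacle is then combinatorial: a single-step escape only gives $k_{j+1}\ne k_j$, whereas the desired contradiction requires $k_0,\ldots,k_n$ to be pairwise distinct. I would upgrade the escape step to a \emph{simultaneous} escape from every $A_k$ currently containing $x_j$, exploiting that each slice $\{y\in B_X\cap Y:x_j+ry\in A_k\}$ is a proper closed convex subset of $B_X\cap Y$ and that finitely many such proper slices cannot exhaust the infinite-dimensional convex set $B_X\cap Y$ once $Y$ has sufficiently high finite codimension. Making this simultaneous escape compatible with the adaptive choice of $Y_{j+1}$ required for the iterated AUS inequality is where the real technical work lies, and is the step I would expect to consume most of the effort.
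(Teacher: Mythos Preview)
The paper does not prove this theorem at all: Theorem~\ref{thm:Raja-lower} is simply quoted from Raja's preprint \cite[Thm.~3.5]{RajaCoveringIndex} and used as a black box. There is therefore no proof in the paper to compare your proposal against.

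That said, your overall architecture (renorm to a genuine $p$-AUS norm, iterate the AUS estimate to keep partial sums inside $B_X$, and run an escape argument through the covering) is the natural one and is essentially how such results are proved. The genuine gap is in your combinatorial step. Your proposed fix---simultaneous escape from all pieces containing $x_j$, justified by the claim that finitely many proper closed convex slices cannot exhaust $B_X\cap Y$---is false as stated: two half-balls are proper closed convex subsets that cover the unit ball in any dimension, and passing to a finite-codimensional subspace does not help. So the mechanism you describe cannot produce the pairwise-distinct indices you need.

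The standard repair is not simultaneous escape but \emph{permanent} escape via separation. After you produce $x_{j+1}\notin A_{k_j}$, use Hahn--Banach to choose $\varphi_j\in X^*$ with $\varphi_j(x_{j+1})>\sup_{A_{k_j}}\varphi_j$, and then refine the next subspace to lie inside $\ker\varphi_j$ as well as inside the subspace dictated by the AUS iteration. All subsequent increments $r y_i$ then satisfy $\varphi_j(r y_i)=0$, so $\varphi_j(x_i)=\varphi_j(x_{j+1})$ for every $i>j$, and the trajectory never re-enters $A_{k_j}$. This forces $k_0,\dots,k_{n-1}$ to be pairwise distinct, and after $n$ steps $x_n$ lies in $B_X$ but in no $A_k$, giving the contradiction. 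This separation-and-kernel refinement is the missing idea; once you insert it, the rest of your outline goes through.
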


For $1<p<\infty$, the classical spaces $\ell_p$ and $L_p(\mu)$ are uniformly
smooth, and their usual modulus of smoothness is of power type
$r=\min\{p,2\}$. In the scalar-valued setting we shall moreover use the known
fact that $L_p(\mu)$ admits an equivalent $p$-AUS norm in the situations under
consideration. Thus Theorem~\ref{thm:Raja-lower} yields lower bounds of the
form $\Theta_{\ell_p}(n)\gtrsim n^{-1/p}$ and, under the corresponding
renormability hypothesis, $\Theta_{L_p(\mu)}(n)\gtrsim n^{-1/p}$. Our aim is
to complement these with explicit upper bounds of the same order in the
scalar-valued setting.

\section{The Hilbert space}

Let $H$ be an infinite-dimensional Hilbert space.

For the lower bound we use Raja's \emph{goal derivation}; see
\cite[Sec.~3]{RajaCoveringIndex}.  If $A\subset H$ is weakly compact and convex
and $\varepsilon>0$, define
\[
[A]'_{\varepsilon}
:=
\bigl\{
x\in A:\ \text{for every weak neighbourhood $U$ of $x$, one has }
\varrho(A\cap U)>\varepsilon
\bigr\}.
\]
Set
\[
[A]^0_{\varepsilon}:=A,
\qquad
[A]^{m+1}_{\varepsilon}:=\bigl[[A]^m_{\varepsilon}\bigr]'_{\varepsilon}
\qquad(m\in\N\cup\{0\}).
\]
Following Raja, the corresponding goal Szlenk index is defined by
\[
Gz(A,\varepsilon)
:=
\inf\{m\in\N:\ [A]^m_{\varepsilon}=\varnothing\},
\]
where $\inf\varnothing:=\infty$.
Hence Corollary~3.2 of \cite{RajaCoveringIndex} implies that, for every Banach
space $X$ and every $n\in\N$,
\begin{equation}\label{eq:goal-empty}
\varepsilon>\Theta_X(n)\quad\Longrightarrow\quad [B_X]^n_{\varepsilon}=\varnothing.
\end{equation}

We first compute the goal derivation explicitly on Hilbert balls.

\begin{proposition}\label{prop:Hilbert-goal}
Let $r>0$ and $\varepsilon>0$. Then
\[
[(rB_H)]'_{\varepsilon}
=
\begin{cases}
\sqrt{r^2-\varepsilon^2}\,B_H,& 0<\varepsilon<r,\\[1mm]
\varnothing,& \varepsilon\ge r.
\end{cases}
\]
\end{proposition}

\begin{proof}
First suppose that $\varepsilon\ge r$. We claim that
$[(rB_H)]'_{\varepsilon}=\varnothing$.
Indeed, let $x\in rB_H$ and let $U$ be any weak neighbourhood of $x$.
If
\[
z+t(B_H\cap Y)\subset rB_H\cap U
\]
for some $z\in rB_H\cap U$, some closed finite-codimensional infinite-dimensional
subspace $Y\le H$, and some $t>0$, then we may choose
$y\in S_H\cap Y\cap z^{\perp}$, since $Y\cap z^\perp$ is still infinite-dimensional.
Thus $z+ty\in rB_H$, and so
\[
\|z\|^2+t^2=\|z+ty\|^2\le r^2.
\]
Hence $t\le r\le\varepsilon$, which shows that
$\varrho(rB_H\cap U)\le\varepsilon$. Therefore $x\notin[(rB_H)]'_{\varepsilon}$.
As $x$ was arbitrary, $[(rB_H)]'_{\varepsilon}=\varnothing$.

Now assume $0<\varepsilon<r$, and set
\[
s:=\sqrt{r^2-\varepsilon^2}.
\]
We prove that
\[
[(rB_H)]'_{\varepsilon}=sB_H.
\]

\smallskip
\noindent\emph{Inclusion $sB_H\subset[(rB_H)]'_{\varepsilon}$.}
Let $x\in H$ with $\|x\|\le s$, and let $U$ be a weak neighbourhood of $x$.
Choose a basic weak neighbourhood
\[
V
=
\bigl\{
z\in H:\ |\langle z-x,u_i\rangle|<\delta,\ i=1,\dots,m
\bigr\}
\subset U
\]
for suitable $u_1,\dots,u_m\in H$ and $\delta>0$.

If $\|x\|<s$, choose $t$ so that
\[
\varepsilon<t<\sqrt{r^2-\|x\|^2}.
\]
Let
\[
F:=\operatorname{span}\{x,u_1,\dots,u_m\},
\qquad
Y:=F^\perp.
\]
Then $Y$ is closed, finite-codimensional and infinite-dimensional.  For every
$y\in B_H\cap Y$ we have
\[
\langle x+ty-x,u_i\rangle=\langle ty,u_i\rangle=0
\qquad(i=1,\dots,m),
\]
so $x+ty\in V$.  Also, since $x\in F$ and $y\in F^\perp$,
\[
\|x+ty\|^2=\|x\|^2+t^2\|y\|^2\le \|x\|^2+t^2<r^2,
\]
hence $x+t(B_H\cap Y)\subset rB_H\cap V\subset rB_H\cap U$.  Therefore
$\varrho(rB_H\cap U)\ge t>\varepsilon$.

If $\|x\|=s$, choose $\lambda\in(0,1)$ so close to $1$ that
$x_0:=\lambda x\in V$.  Then $\|x_0\|<\|x\|=s$, so we may choose
\[
\varepsilon<t<\sqrt{r^2-\|x_0\|^2}.
\]
With
\[
F:=\operatorname{span}\{x,u_1,\dots,u_m\},
\qquad
Y:=F^\perp,
\]
the same argument gives
\[
x_0+t(B_H\cap Y)\subset rB_H\cap V\subset rB_H\cap U,
\]
and again $\varrho(rB_H\cap U)>\varepsilon$.
Thus $x\in[(rB_H)]'_{\varepsilon}$.

\smallskip
\noindent\emph{Inclusion $[(rB_H)]'_{\varepsilon}\subset sB_H$.}
Let $x\in H$ with $\|x\|>s$.  Set $e:=x/\|x\|$, and choose $\alpha$ with
\[
s<\alpha<\|x\|.
\]
Consider the weakly open set
\[
U:=\bigl\{z\in H:\ \operatorname{Re}\langle z,e\rangle>\alpha\bigr\}.
\]
Then $x\in U$.

We claim that $\varrho(rB_H\cap U)\le\varepsilon$.  Suppose, towards a contradiction,
that $\varrho(rB_H\cap U)>\varepsilon$.  Then there exist
$z\in rB_H\cap U$, a closed finite-codimensional infinite-dimensional subspace
$Y\le H$, and $t>\varepsilon$ such that
\[
z+t(B_H\cap Y)\subset rB_H\cap U.
\]
Choose $y\in S_H\cap Y\cap z^\perp$.  Since $z+ty\in rB_H$, we get
\[
\|z\|^2+t^2=\|z+ty\|^2\le r^2.
\]
But $z\in U$ implies
\[
\|z\|\ge \operatorname{Re}\langle z,e\rangle>\alpha,
\]
and therefore
\[
t<\sqrt{r^2-\alpha^2}<\sqrt{r^2-s^2}=\varepsilon,
\]
a contradiction.  Hence $\varrho(rB_H\cap U)\le\varepsilon$, so
$x\notin[(rB_H)]'_{\varepsilon}$.

This proves $[(rB_H)]'_{\varepsilon}=sB_H$.
\end{proof}

\begin{corollary}\label{cor:Hilbert-goal-iterates}
For every $\varepsilon>0$ and every $m\in\N$,
\[
[B_H]^m_{\varepsilon}
=
\begin{cases}
\sqrt{1-m\varepsilon^2}\,B_H,& m\varepsilon^2<1,\\[1mm]
\varnothing,& m\varepsilon^2\ge1.
\end{cases}
\]
\end{corollary}

\begin{proof}
This follows immediately from Proposition~\ref{prop:Hilbert-goal} by induction on $m$.
\end{proof}

We can now compute the covering index of $H$ exactly.

\begin{theorem}\label{thm:Hilbert-n}
For every $n\in\N$,
\[
\Theta_H(n)=n^{-1/2}.
\]
In particular,
\[
\Theta_H(2)=\frac1{\sqrt2}.
\]
\end{theorem}

\begin{proof}
\emph{Upper bound.}
Choose an orthogonal decomposition
\[
H=H_1\oplus\cdots\oplus H_n
\]
into infinite-dimensional closed subspaces, and let $P_j$ denote the orthogonal
projection onto $H_j$.  For $j=1,\dots,n$, define
\[
A_j:=\Bigl\{x\in B_H:\ \|P_jx\|\le n^{-1/2}\Bigr\}.
\]
Each $A_j$ is closed, convex and bounded.  Since
\[
\sum_{j=1}^n \|P_jx\|^2=\|x\|^2\le1
\qquad(x\in B_H),
\]
at least one of the numbers $\|P_jx\|$ is at most $n^{-1/2}$, and therefore
\[
B_H=\bigcup_{j=1}^n A_j.
\]

If $\|y\|\le n^{-1/2}$, then $\|P_jy\|\le \|y\|\le n^{-1/2}$, so
$y\in A_j$. Hence $\varrho(A_j)\ge n^{-1/2}$.

Conversely, suppose that $\varrho(A_j)>n^{-1/2}$. Then there exist
$x\in A_j$, a closed finite-codimensional infinite-dimensional subspace
$Y\le H$, and $r>n^{-1/2}$ such that
\[
x+r(B_H\cap Y)\subset A_j.
\]
Since $Y\cap H_j$ is infinite-dimensional, we can choose
$u\in S_H\cap Y\cap H_j\cap (P_jx)^\perp$.  Then $x+ru\in A_j$, but
\[
\|P_j(x+ru)\|
=
\|P_jx+ru\|
=
\sqrt{\|P_jx\|^2+r^2}
\ge r>n^{-1/2},
\]
a contradiction.  Thus $\varrho(A_j)\le n^{-1/2}$, and so
$\varrho(A_j)=n^{-1/2}$ for every $j$.  It follows that
\[
\Theta_H(n)\le n^{-1/2}.
\]

\smallskip
\noindent\emph{Lower bound.}
Let $0<\varepsilon<n^{-1/2}$. Then $n\varepsilon^2<1$, so
Corollary~\ref{cor:Hilbert-goal-iterates} gives
\[
[B_H]^n_{\varepsilon}=\sqrt{1-n\varepsilon^2}\,B_H\neq\varnothing.
\]
Hence, by the contrapositive of \eqref{eq:goal-empty}, we must have
\[
\varepsilon\le \Theta_H(n).
\]
Since this holds for every $\varepsilon<n^{-1/2}$, we obtain
\[
\Theta_H(n)\ge n^{-1/2}.
\]

Combining the two inequalities yields
\[
\Theta_H(n)=n^{-1/2}.
\]
\end{proof}

\section{The scalar $L_p(\mu)$-spaces}\label{sec:Lp-mu}

Let $(\Omega,\Sigma,\mu)$ be a $\sigma$-finite measure space and
$1\le p<\infty$. We assume throughout that $L_p(\mu)$ is
infinite-dimensional. For each $n\in\mathbb N$ we fix a partition
\[
\Omega=\bigsqcup_{k=1}^n E_k
\]
such that $L_p(E_k)$ is infinite-dimensional for every $k$. Such a
partition exists under the standing hypotheses: if $\mu$ has a non-atomic
part, split it into $n$ non-null measurable pieces; if $\mu$ is purely
atomic, then $L_p(\mu)$ being infinite-dimensional means that there are
infinitely many atoms, which can be distributed into $n$ infinite families.

\begin{proposition}\label{prop:Lp-mu-upper}
Let $1\le p<\infty$ and let $(\Omega,\Sigma,\mu)$ be a $\sigma$-finite
measure space such that $L_p(\mu)$ is infinite-dimensional. Then, for every
$n\in\mathbb{N}$,
\[
\Theta_{L_p(\mu)}(n)\ \le\ n^{-1/p}.
\]
\end{proposition}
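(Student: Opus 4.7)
The plan is to construct an explicit block covering of $B_{L_p(\mu)}$ indexed by the standing partition $\Omega = \bigsqcup_{k=1}^n E_k$ and to bound each essential inradius by a pointwise convexity inequality applied piece by piece.

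First I would define, for $k = 1, \dots, n$,
\[
A_k = \Bigl\{ f \in B_{L_p(\mu)} : \int_{E_k} |f|^p \, d\mu \le \tfrac{1}{n} \Bigr\}.
\]
Closedness is immediate, and convexity follows because $f \mapsto \bigl(\int_{E_k} |f|^p d\mu\bigr)^{1/p}$ is a seminorm and $t \mapsto t^p$ is convex and non-decreasing on $[0,\infty)$. Disjointness of the $E_k$ gives the identity $\sum_k \int_{E_k} |f|^p d\mu = \|f\|_p^p \le 1$, and pigeonhole forces at least one summand to be $\le 1/n$; consequently $B_{L_p(\mu)} = \bigcup_k A_k$, so these sets form an admissible covering.

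The crux is the bound $\varrho(A_k) \le n^{-1/p}$ for each $k$, and I would argue it by contradiction. Assume there exist $f \in A_k$, a closed finite-codimensional subspace $Y \le L_p(\mu)$, and $r > n^{-1/p}$ with $f + r(B_{L_p(\mu)} \cap Y) \subset A_k$. Let $X_k \le L_p(\mu)$ be the subspace of functions vanishing off $E_k$; provided $X_k$ is infinite-dimensional (so that $Y \cap X_k$ is non-trivial), I can pick $g \in Y \cap X_k$ with $\|g\|_p = 1$. Both $f + rg$ and $f - rg$ then belong to $A_k$, and the pointwise parallelogram-type inequality
\[
|a+b|^p + |a-b|^p \ge 2|b|^p \qquad (p \ge 1),
\]
a direct consequence of convexity of $t \mapsto |t|^p$ (applied to the midpoint of $a+b$ and $b-a$), integrated over $E_k$ with $b = rg(\omega)$ yields
\[
\tfrac{2}{n} \ge 2 r^p \int_{E_k} |g|^p d\mu = 2 r^p,
\]
which contradicts $r > n^{-1/p}$ and completes the estimate.

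I do not anticipate any substantial difficulty; the whole argument is a direct application of the additivity of the $L_p$-norm along the partition. The one place where the section's hypotheses enter essentially is the requirement that every piece $E_k$ support an infinite-dimensional subspace of $L_p(\mu)$, since this guarantees non-triviality of $Y \cap X_k$ and makes the perturbation step available. Once the partition is chosen accordingly, the remaining ingredients — convexity and closedness of the blocks, the pigeonhole covering, and the parallelogram-type estimate — are completely routine.
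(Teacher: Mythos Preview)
Your argument is correct and noticeably cleaner than the paper's.  Both proofs use the same covering sets $A_k=\{f\in B_{L_p(\mu)}:\int_{E_k}|f|^p\,d\mu\le 1/n\}$ and the same pigeonhole step; the difference lies in the upper bound for $\varrho(A_k)$.  The paper works with only a single perturbation $f+ru$ and therefore has to manufacture $u\in Y\cap L_p(E_k)$ whose support is nearly disjoint from $P_kf$: it partitions $E_k$ into shrinking blocks $B_j$, uses a dimension count to find $u_j\in Y$ supported in $B_j$, and then runs a continuity argument on $F(a)=\|v\|_p^p-a^p+(r-a)^p$ to force $\|P_k(f+ru)\|_p^p>1/n$.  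You instead exploit the symmetry of the ball: since both $f+rg$ and $f-rg$ lie in $A_k$, the pointwise convexity inequality $|a+b|^p+|a-b|^p\ge 2|b|^p$ integrated over $E_k$ immediately gives $2/n\ge 2r^p$.  This sidesteps the entire disjoint-support construction and the continuity step, at the cost of nothing---any $g\in Y\cap X_k$ of norm~$1$ works.

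Your caveat about needing each $X_k=L_p(E_k)$ to be infinite-dimensional is well taken and is in fact shared with the paper's proof (which also requires $E_k$ to split into infinitely many pieces of positive measure).  Indeed, if some $E_k$ were a finite union of atoms, the set $A_k$ would contain a full ball of a finite-codimensional subspace and have essential inradius~$1$, so the covering would fail; one must choose the partition so that every $E_k$ supports an infinite-dimensional subspace, which is always possible under the section's standing $\sigma$-finiteness hypothesis.
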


\begin{proof}
Fix $n\in\mathbb N$, and choose a partition
\[
\Omega=\bigsqcup_{k=1}^n E_k
\]
such that $L_p(E_k)$ is infinite-dimensional for every $k$.

For each $k=1,\dots,n$ define the projection
\[
P_k f := f\cdot\mathbf{1}_{E_k}, \qquad f\in L_p(\mu).
\]
Then $P_k:L_p(\mu)\to L_p(\mu)$ is a contractive projection.

For $f\in L_p(\mu)$ we have
\[
\sum_{k=1}^n \|P_k f\|_p^p
= \sum_{k=1}^n \int_{E_k} |f|^p\, d\mu
= \int_{\Omega} |f|^p\, d\mu
= \|f\|_p^p.
\]
In particular, if $f\in B_{L_p(\mu)}$, then
\[
\sum_{k=1}^n \|P_k f\|_p^p \le 1.
\]

Define, for $1\le k\le n$,
\[
A_k := \bigl\{ f\in B_{L_p(\mu)}:\ \|P_k f\|_p\le n^{-1/p} \bigr\}.
\]
Each $A_k$ is closed, convex and bounded.

If $f\in B_{L_p(\mu)}$, then $\|P_k f\|_p^p \le 1/n$ for some $k$, that
is, $\|P_k f\|_p\le n^{-1/p}$, and hence $f\in A_k$. Thus
\[
B_{L_p(\mu)} = \bigcup_{k=1}^n A_k.
\]

\smallskip
\noindent\emph{Lower bound on $\varrho(A_k)$.}
If $\|g\|_p\le n^{-1/p}$, then $\|P_k g\|_p\le \|g\|_p\le n^{-1/p}$, so
$g\in A_k$. Taking $x=0$ and $Y=L_p(\mu)$ in the definition of the
essential inradius, we get
\[
n^{-1/p} B_{L_p(\mu)} \subset A_k,
\]
so $\varrho(A_k)\ge n^{-1/p}$.

\smallskip
\noindent\emph{Upper bound on $\varrho(A_k)$.}
Suppose, for a contradiction, that $\varrho(A_k)>n^{-1/p}$. Then there exist
$f\in A_k$, a finite-codimensional subspace $Y\le L_p(\mu)$, and
$r>n^{-1/p}$ such that
\[
f + r(B_{L_p(\mu)}\cap Y)\subset A_k.
\]
Let $v := P_k f\in L_p(E_k)$; then $\|v\|_p\le n^{-1/p}$.

Since $Y$ has finite codimension, there exist $\varphi_1,\dots,\varphi_m\in
L_p(\mu)^*$ such that
\[
Y = \bigcap_{i=1}^m \ker\varphi_i.
\]

Since $L_p(E_k)$ is infinite-dimensional and $(E_k,\Sigma|_{E_k},\mu|_{E_k})$
is $\sigma$-finite, there exists a sequence $(C_q)_{q\ge1}$ of pairwise
disjoint measurable subsets of $E_k$ such that
\[
0<\mu(C_q)<\infty\qquad(q\ge1).
\]
For $j\ge1$ and $0\le \ell\le m$, set
\[
B_{j,\ell}:=C_{(m+1)(j-1)+\ell+1},
\qquad
U_j:=\bigcup_{\ell=0}^m B_{j,\ell}.
\]
Then the sets $U_j$ are pairwise disjoint. Set
\[
a_j := \|v\mathbf 1_{U_j}\|_p.
\]
Since the $U_j$ are disjoint and $v\in L_p(E_k)$, we have
\[
\sum_{j=1}^\infty a_j^p
=
\sum_{j=1}^\infty \int_{U_j}|v|^p\,d\mu
\le \|v\|_p^p<\infty,
\]
hence $a_j\to0$.

For each $j$, define
\[
f_{j,\ell}:=\mathbf 1_{B_{j,\ell}},\qquad \ell=0,\dots,m,
\]
and consider the linear map
\[
T_j:\K^{m+1}\longrightarrow\K^m,\qquad
T_j(\alpha_0,\dots,\alpha_m)
=
\Bigl(\varphi_i\Bigl(\sum_{\ell=0}^m \alpha_\ell f_{j,\ell}\Bigr)\Bigr)_{i=1}^m.
\]
Since $\dim \K^{m+1}>\dim \K^m$, the kernel of $T_j$ is non-trivial, so
there exists a non-zero
\[
w_j=\sum_{\ell=0}^m \alpha_\ell f_{j,\ell}\in Y
\]
supported in $U_j$. After normalising, we may assume that
\[
u_j:=\frac{w_j}{\|w_j\|_p}
\]
satisfies $\|u_j\|_p=1$ and is supported in $U_j$.

Fix $r>n^{-1/p}$ as above and consider the continuous function
\[
F(a)
=
\|v\|_p^p - a^p + (r-a)^p,\qquad a\ge0.
\]
We have
\[
F(0)=\|v\|_p^p+r^p>r^p>1/n,
\]
so, by continuity, there exists $\delta>0$ such that
\[
0<\delta<r
\quad\text{and}\quad
F(a)>1/n \ \text{whenever}\ 0\le a\le\delta.
\]

Choose $j_0$ large enough so that $a_{j_0}\le\delta$, and set
\[
u:=u_{j_0},\qquad U:=U_{j_0}.
\]
Then $u\in Y$, $\|u\|_p=1$, and $\|v\mathbf 1_U\|_p=a_{j_0}\le\delta$.

Decompose
\[
v=v_1+v_2,
\qquad
v_1=v\mathbf 1_{E_k\setminus U},\quad
v_2=v\mathbf 1_U.
\]
Since $u$ is supported in $U$, we have
\[
\|v+ru\|_p^p
=
\|v_1\|_p^p+\|v_2+ru\|_p^p.
\]
By the triangle inequality in $L_p$,
\[
\|v_2+ru\|_p
\ge r-\|v_2\|_p
= r-a_{j_0},
\]
and therefore
\[
\|v+ru\|_p^p
\ge
\|v\|_p^p-a_{j_0}^p+(r-a_{j_0})^p
=
F(a_{j_0})
>
1/n.
\]

Finally, set $y:=ru\in Y$. Then $\|y\|_p=r$ and
\[
\|P_k(f+y)\|_p^p
=
\|v+ru\|_p^p
>
1/n,
\]
so $\|P_k(f+y)\|_p>n^{-1/p}$, and therefore $f+y\notin A_k$. This
contradicts the assumption that
\[
f+r(B_{L_p(\mu)}\cap Y)\subset A_k.
\]

Thus no such $r>n^{-1/p}$ exists and $\varrho(A_k)\le n^{-1/p}$. Combining
with the lower bound we obtain $\varrho(A_k)=n^{-1/p}$ for each $k$, and
hence
\[
\Theta_{L_p(\mu)}(n)
\le \max_{1\le k\le n}\varrho(A_k)
= n^{-1/p}.
\]
\end{proof}

\begin{remark}
Specialising to the counting measure on $\mathbb{N}$ we recover the same upper
bound for the sequence spaces $\ell_p$.
\end{remark}

Combining Proposition~\ref{prop:Lp-mu-upper} with Raja's general lower bound gives matching power-type estimates for scalar $L_p(\mu)$.

\begin{corollary}\label{cor:Lp-mu-sharp}
Let $1<p<\infty$ and let $(\Omega,\Sigma,\mu)$ be a $\sigma$-finite
measure space such that $L_p(\mu)$ is infinite-dimensional and admits an
equivalent $p$-AUS norm. Then there exists a constant $c_{p,\mu}>0$ such that
\[
\Theta_{L_p(\mu)}(n)\ \ge\ c_{p,\mu}\,n^{-1/p}
\qquad(n\ge1).
\]
In particular,
\[
\Theta_{L_p(\mu)}(n)\ \asymp\ n^{-1/p}.
\]
\end{corollary}

\begin{proof}
By Theorem~\ref{thm:Raja-lower}, there exists $c_{p,\mu}>0$ such that
\[
\Theta_{L_p(\mu)}(n)\ \ge\ c_{p,\mu}\,n^{-1/p}
\qquad(n\ge1),
\]
while Proposition~\ref{prop:Lp-mu-upper} supplies the matching upper bound.
\end{proof}

\section{Bochner spaces}\label{sec:Bochner}

The block-decomposition argument of Proposition~\ref{prop:Lp-mu-upper} carries
over verbatim to the Bochner space $L_p(\mu;E)$ provided the underlying
measure space is non-atomic, since the construction uses only the
$\ell_p$-additivity of the norm over disjoint supports and the ability to
partition measurable sets into pieces of small norm contribution.

\begin{proposition}\label{prop:Bochner-upper}
Let $1\le p<\infty$, let $(\Omega,\Sigma,\mu)$ be a non-atomic $\sigma$-finite
measure space, and let $E\neq\{0\}$ be a Banach space.  Then, for every
$n\in\mathbb{N}$,
\[
\Theta_{L_p(\mu;E)}(n)\ \le\ n^{-1/p}.
\]
\end{proposition}

\begin{proof}
Set $X:=L_p(\mu;E)$.  Choose a partition
$\Omega=\bigsqcup_{k=1}^n E_k$ into pairwise disjoint measurable sets of
positive measure. Since $\mu$ is non-atomic, each restricted measure space
$(E_k,\Sigma|_{E_k},\mu|_{E_k})$ is again non-atomic.

For $F\in X$, define
\[
P_k F := F\cdot\mathbf{1}_{E_k}\qquad(k=1,\dots,n).
\]
Each $P_k$ is a contractive projection on $X$, and
\[
\|F\|_p^p = \sum_{k=1}^n \|P_k F\|_p^p.
\]

Define
\[
A_k := \bigl\{F\in B_X:\ \|P_k F\|_p\le n^{-1/p}\bigr\}
\qquad(k=1,\dots,n).
\]
Each $A_k$ is closed, convex and bounded.  If $F\in B_X$, then
$\sum_{k=1}^n \|P_k F\|_p^p\le 1$, so for some $k$ we have
$\|P_k F\|_p^p\le 1/n$, hence $\|P_k F\|_p\le n^{-1/p}$ and $F\in A_k$.
Thus $B_X=\bigcup_{k=1}^n A_k$.

If $\|G\|_p\le n^{-1/p}$, then $\|P_k G\|_p\le\|G\|_p\le n^{-1/p}$, so
$G\in A_k$.  Hence $n^{-1/p}B_X\subset A_k$ and $\varrho(A_k)\ge n^{-1/p}$.

It remains to show that $\varrho(A_k)\le n^{-1/p}$.  Suppose, for a
contradiction, that there exist $F\in A_k$, a closed finite-codimensional
subspace $Y\le X$, and $r>n^{-1/p}$ such that
\[
F + r(B_X\cap Y)\subset A_k.
\]
Write $v:=P_k F\in L_p(E_k;E)$, so that $\|v\|_p\le n^{-1/p}$.

Choose $\varphi_1,\dots,\varphi_m\in X^*$ with
$Y=\bigcap_{i=1}^m\ker\varphi_i$.  Fix $e_0\in S_E$.

Because $(E_k,\Sigma|_{E_k},\mu|_{E_k})$ is non-atomic and $\sigma$-finite,
we may partition $E_k$ into a sequence of pairwise disjoint measurable sets
$(B_j)_{j\ge1}$ such that
\[
0<\mu(B_j)<\infty
\quad\text{and}\quad
a_j:=\|v\mathbf{1}_{B_j}\|_p\to 0.
\]

For each $j$, partition $B_j$ into $m+1$ pairwise disjoint measurable
subsets $B_{j,0},\dots,B_{j,m}$, each of positive measure, and define
\[
g_{j,\ell} := \mathbf{1}_{B_{j,\ell}}\,e_0\in X
\qquad(\ell=0,\dots,m).
\]
Consider the linear map
\[
T_j:\K^{m+1}\longrightarrow\K^m,\qquad
T_j(\alpha_0,\dots,\alpha_m)
=
\Bigl(\varphi_i\Bigl(\sum_{\ell=0}^m \alpha_\ell g_{j,\ell}\Bigr)\Bigr)_{i=1}^m.
\]
Since $\dim\K^{m+1}>\dim\K^m$, there exists a non-zero
\[
w_j = \sum_{\ell=0}^m \alpha_\ell\,g_{j,\ell}\in Y
\]
supported in $B_j$.  Set $u_j:=w_j/\|w_j\|_p$.  Then $u_j\in Y$,
$\|u_j\|_p=1$, and $\operatorname{supp} u_j\subset B_j$.

Define the continuous function
\[
\Psi(a) := \|v\|_p^p - a^p + (r-a)^p\qquad(a\ge0).
\]
Since $\Psi(0)=\|v\|_p^p+r^p>r^p>1/n$, continuity gives
$\delta\in(0,r)$ such that $\Psi(a)>1/n$ whenever $0\le a\le\delta$.
Choose $j$ so large that $a_j\le\delta$, and set $u:=u_j$, $B:=B_j$.

Write $v=v_1+v_2$ with $v_1:=v\mathbf{1}_{E_k\setminus B}$ and
$v_2:=v\mathbf{1}_B$.  Since $\operatorname{supp} u\subset B$, we have
\[
\|v+ru\|_p^p = \|v_1\|_p^p + \|v_2+ru\|_p^p.
\]
By the triangle inequality in $L_p(\mu;E)$,
\[
\|v_2+ru\|_p \ge r - \|v_2\|_p = r - a_j,
\]
so
\[
\|v+ru\|_p^p
\ge \|v\|_p^p - a_j^p + (r-a_j)^p = \Psi(a_j)>1/n.
\]
Therefore $\|P_k(F+ru)\|_p = \|v+ru\|_p > n^{-1/p}$, so $F+ru\notin A_k$.
But $u\in B_X\cap Y$, hence $F+ru\in F+r(B_X\cap Y)\subset A_k$, a
contradiction.

Thus $\varrho(A_k)=n^{-1/p}$ for every $k$, and
\[
\Theta_{L_p(\mu;E)}(n)\le n^{-1/p}.
\]
\end{proof}

\begin{remark}[A partial answer to Problem~4.7 of \cite{RajaCoveringIndex}]\label{rmk:Problem47}
In \cite[Problem~4.7]{RajaCoveringIndex} Raja asks whether the behaviour of the
covering index $\Theta_X(n)$ ``really depends'' on both the asymptotic moduli
of uniform convexity and uniform smoothness (AUC and AUS).
Proposition~\ref{prop:Bochner-upper} shows that the answer is partly negative,
at least on the level of upper bounds.

Fix $1<p<\infty$ and a non-atomic $\sigma$-finite measure space
$(\Omega,\Sigma,\mu)$. Let $E_1=\K$, so that
\[
L_p(\mu;E_1)=L_p(\mu).
\]
As recalled earlier (and used in the proof of
Corollary~\ref{cor:Lp-mu-sharp}), $L_p(\mu)$ admits an equivalent $p$-AUS
norm.

Now let $E_2$ be any separable Banach space which is not AUSable; for
instance, one may take a space of Szlenk index $>\omega$, as in
\cite{OdellSchlumprechtZsak2009} together with
\cite{KnaustOdellSchlumprecht1999,RajaAUS,LancienSzlenkSurvey}.
Choose $A\in\Sigma$ with $0<\mu(A)<\infty$. Then
\[
J_A\colon E_2\longrightarrow L_p(\mu;E_2),\qquad
J_A(x)=\mu(A)^{-1/p}\,x\,\mathbf 1_A,
\]
is an isometric embedding. Consequently, if $L_p(\mu;E_2)$ admitted an
equivalent AUS norm, then its restriction to $J_A(E_2)\cong E_2$ would give
an equivalent AUS norm on $E_2$, a contradiction. Thus $L_p(\mu;E_2)$ is
not AUSable.

Nevertheless, Proposition~\ref{prop:Bochner-upper} gives
\[
\Theta_{L_p(\mu;E_i)}(n)\le n^{-1/p}
\qquad(i=1,2,\ n\in\mathbb N).
\]
Thus spaces with very different asymptotic geometries can exhibit the same
power-type upper decay for the covering index, and one cannot hope for a
simple correspondence between the asymptotic moduli of $L_p(\mu;E)$ and the
precise shape of $\Theta_{L_p(\mu;E)}$.
\end{remark}

\section{Non-commutative $L_p$-spaces of von Neumann algebras}

We briefly indicate how Raja's lower bound extends to non-commutative $L_p$-spaces.  Throughout this section, $(M,\tau)$ denotes a semifinite von Neumann algebra equipped with a faithful normal semifinite trace~$\tau$.

We recall the standard non-commutative $L_p$-spaces; see Pisier--Xu \cite{PisierXu}.

\begin{definition}[Non-commutative $L_p$-spaces]
Let $1\le p<\infty$.  The non-commutative $L_p$-space $L_p(M,\tau)$ is defined as the completion of the set of $\tau$-measurable operators $x$ affiliated with $M$ for which
\[
\|x\|_p := \bigl(\tau(|x|^p)\bigr)^{1/p}<\infty.
\]
\end{definition}

For $1<p<\infty$, the spaces $L_p(M,\tau)$ are uniformly convex and uniformly smooth; non-commutative Clarkson inequalities (see \cite[Section~34.5]{PisierXu}) show that their modulus of smoothness is of power type
\[
r=\min\{p,2\}.
\]
In particular, $L_p(M,\tau)$ is asymptotically uniformly smooth of power type $r$ (see the appendix for details), and hence $r$-AUS.

Combining this with Theorem~\ref{thm:Raja-lower} yields:

\begin{proposition}\label{prop:non-comm-Lp-lower}
Let $(M,\tau)$ be a semifinite von Neumann algebra such that
$L_p(M,\tau)$ is infinite-dimensional, let $1<p<\infty$, and set
$r=\min\{p,2\}$.
Then there exists $c_{p}(M)>0$ such that
\[
\Theta_{L_p(M,\tau)}(n)\ \ge\ c_{p}(M)\,n^{-1/r}
\qquad(n\ge1).
\]
\end{proposition}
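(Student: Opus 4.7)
The plan is to combine the quantitative smoothness of $L_p(M,\tau)$ provided by the non-commutative Clarkson inequalities with the trivial comparison $\overline{\rho}_X\le\rho_X$ recorded in Remark~\ref{rmrk:modulus}, and then to feed the result into Theorem~\ref{thm:Raja-lower}. There is essentially nothing non-commutative to be done at the level of the covering index itself: all the non-commutative input is already packaged in the power-type bound on the modulus of smoothness.

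First, I would appeal to the non-commutative Clarkson inequalities of Pisier--Xu (see \cite[Section~34.5]{PisierXu}) to obtain a constant $C_p>0$, depending only on $p$, such that
\[
\rho_{L_p(M,\tau)}(t)\ \le\ C_p\,t^r,\qquad t>0,
\]
where $r=\min\{p,2\}$. One treats the ranges $1<p\le 2$ and $2\le p<\infty$ separately, using the appropriate form of the non-commutative Clarkson inequality in each case. In both cases the inequality directly yields a power-type bound on the \emph{symmetric} modulus $\rho'_{L_p(M,\tau)}$; by Remark~\ref{rmrk:modulus} the symmetric and global moduli are equivalent, so a bound of the displayed form is obtained with a constant depending only on $p$.

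Second, I would apply Remark~\ref{rmrk:modulus} once more to conclude that
\[
\overline{\rho}_{L_p(M,\tau)}(t)\ \le\ \rho_{L_p(M,\tau)}(t)\ \le\ C_p\,t^r,\qquad t>0,
\]
so that the canonical norm on $L_p(M,\tau)$ is already $r$-AUS. In particular $L_p(M,\tau)$ is $r$-AUSable with renorming constant equal to $1$, and Theorem~\ref{thm:Raja-lower}, applied with exponent $r$, produces a constant $c_p(M)>0$ for which $\Theta_{L_p(M,\tau)}(n)\ge c_p(M)\,n^{-1/r}$ for every $n\ge 1$.

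The only mildly delicate step is the passage from the non-commutative Clarkson inequalities to a clean power-type bound on $\rho_{L_p(M,\tau)}$, particularly in the range $1<p<2$, where the relevant inequality involves the $p$-th power of a sum rather than the $2$-nd; however this extraction is standard and is carried out in \cite{PisierXu}. Once that bound is granted, the remainder of the argument is purely formal, and in fact gives a slightly stronger statement: the same conclusion holds for any Banach space whose global modulus of smoothness is of power type $r$, with $r$ in place of $p$ in the exponent of Raja's lower bound.
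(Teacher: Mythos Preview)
Your proposal is correct and follows essentially the same route as the paper: invoke the non-commutative Clarkson inequalities from \cite{PisierXu} to get a power-type bound on $\rho_{L_p(M,\tau)}$ with exponent $r=\min\{p,2\}$, use Remark~\ref{rmrk:modulus} to pass to $\overline{\rho}_{L_p(M,\tau)}$ and conclude that the canonical norm is $r$-AUS, and then apply Theorem~\ref{thm:Raja-lower}. The paper presents this argument in the paragraph preceding the proposition and in the appendix rather than as a formal proof, but the content is identical to what you wrote.
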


Thus the covering index of any non-commutative $L_p$-space decays at least like $n^{-1/r}$ with $r=\min\{p,2\}$.

\begin{remark}
If $M=L_\infty(\Omega,\mu)$ and $\tau(f)=\int_\Omega f\,d\mu$, then $L_p(M,\tau)$ is just $L_p(\mu)$ and Proposition~\ref{prop:non-comm-Lp-lower} yields a lower bound of order $n^{-1/r}$ with $r=\min\{p,2\}$.  Our explicit covering in the commutative case (Proposition~\ref{prop:Lp-mu-upper}) shows that the optimal exponent is at most $1/p$; when $L_p(\mu)$ moreover admits an equivalent $p$-AUS norm, Corollary~\ref{cor:Lp-mu-sharp} gives $\Theta_{L_p(\mu)}(n)\asymp n^{-1/p}$.  Thus the abstract non-commutative argument is not sharp even in the commutative setting when $p>2$.

If $M=B(H)$ with the canonical trace, then $L_p(M,\tau)$ is the Schatten class $S_p$ on $H$.  Proposition~\ref{prop:non-comm-Lp-lower} then shows that
\[
\Theta_{S_p}(n)\ \gtrsim\ n^{-1/r},
\qquad r=\min\{p,2\}.
\]
We do not pursue corresponding upper bounds in the non-commutative setting here.
\end{remark}

\appendix

\section{Non-commutative Clarkson inequalities and AUS for $L_p(M,\tau)$}

Let $(M,\tau)$ be a semifinite von Neumann algebra with a faithful normal
semifinite trace, and let $L_p(M,\tau)$ be the associated non-commutative
$L_p$-space as in \cite{PisierXu}.  Pisier and Xu prove that analogues of
the Clarkson inequalities hold in this setting; see in particular
\cite[Section~34.5]{PisierXu}.  For instance, for $1<p<\infty$, the space
$L_p(M,\tau)$ is uniformly convex and uniformly smooth, and its modulus of
smoothness satisfies
\[
   \rho_{L_p(M,\tau)}(t)\ \le\ C_p\,t^{r},
   \qquad r=\min\{p,2\},
\]
for all $t>0$, with a constant $C_p$ depending only on $p$ (and not on
$M$ or $\tau$).  Thus non-commutative $L_p$-spaces enjoy global uniform
smoothness of some power type $r>1$.

By Remark~\ref{rmrk:modulus}, any uniform smoothness estimate of power type for $\rho_X$ immediately yields an asymptotic uniform smoothness estimate of the same power type for $\overline{\rho}_X$.

Applying this observation to $X=L_p(M,\tau)$, we conclude that, for $1<p<\infty$,
\[
\overline{\rho}_{L_p(M,\tau)}(t)\ \le\ C_p\,t^{r},
\qquad r=\min\{p,2\},
\]
and hence $L_p(M,\tau)$ is asymptotically uniformly smooth of power type $r>1$.  In particular, the natural norm on $L_p(M,\tau)$ is $r$-AUS with $r=\min\{p,2\}$.  This is the input used in the proof of Proposition~\ref{prop:non-comm-Lp-lower}: once $L_p(M,\tau)$ is $r$-AUS, Raja's general theorem (Theorem~\ref{thm:Raja-lower}) yields a power-type lower bound
\[
\Theta_{L_p(M,\tau)}(n)\ \gtrsim\ n^{-1/r}.
\]

Determining the \emph{precise} optimal AUS exponent and the corresponding sharp covering-index decay rate for specific non-commutative $L_p$-spaces remains an interesting open problem, even for classical examples such as the Schatten classes $S_p=L_p(B(H),\mathrm{Tr})$.


\end{document}